\documentclass[12pt]{amsart}
\usepackage[utf8]{inputenc}
\usepackage{amssymb}
\usepackage{geometry}
\geometry{left=2.5cm,right=2.5cm}
\usepackage{amsmath,amscd}
\usepackage{amsthm}
\usepackage{graphicx}
\usepackage{color,hyperref}
\usepackage{tikz}
\usepackage{tikz-cd}
\usepackage{amsfonts}
\usetikzlibrary{matrix,arrows}
\usepackage{mathrsfs}
\usepackage{amssymb}
\usepackage{adjustbox}
\begin{document}

	\newcommand{\mf}{\mathfrak}
	\newcommand{\mc}{\mathcal}
	\newcommand{\mb}{\mathbf}
	
	\newcommand{\R}{\mathbf R}
	\newcommand{\C}{\mathbf C}
	\newcommand{\Q}{\mathbf Q}
	\newcommand{\Z}{\mathbf Z}
	\newcommand{\F}{\mathbf F}
	\newcommand{\N}{\mathbf N}

	\newcommand{\Fix}{\textnormal{Fix}}
	\newcommand{\End}{\textnormal{End}}
	\newcommand{\Frob}{\textnormal{Frob}}

	\newcommand{\sm}[4]{{
			\left(\begin{smallmatrix}{#1}&{#2}\\{#3}&{#4}\end{smallmatrix}\right)}}
	\newcommand{\sv}[2]{
		\genfrac(){0pt}{1}{#1}{#2}}

	\numberwithin{equation}{section}
	
	\theoremstyle{plain}
	\newtheorem{theorem}{Theorem}
	\newtheorem{lemma}[theorem]{Lemma}
	\newtheorem{proposition}[theorem]{Proposition}
	\newtheorem*{theorem1}{Theorem 1}
	\newtheorem{corollary}[theorem]{Corollary}
		
	\theoremstyle{definition}
	\newtheorem{definition}[theorem]{Definition}
	\newtheorem{example}[theorem]{Example}
	\newtheorem{conjecture}[theorem]{Conjecture}
	\newtheorem{remark}[theorem]{Remark}

	\title{On the bmy inequality on surfaces}
	\author{SADIK TERZ\.{I}}
	
	\address{Middle East Technical University, Mathematics Department, 06800 Ankara,Turkey.}
	\email{sterzi@metu.edu.tr}
	
	
	\subjclass[2010]{14G17, 14D05, 14H10, 14J29}
	
	\keywords{ BMY inequality, ordinarity, semistable fibrations.}
	
	\maketitle
	
	\begin{abstract}
	In this paper, we are concerned with the relation between the ordinarity of surfaces of general type and the failure of the BMY inequality in positive characteristic. We  consider semistable fibrations $\pi:S \longrightarrow C$ where $S$ is a smooth projective surface and $C$ is a smooth projective curve. Using the exact sequence relating the locally exact differential forms on $S$, $C$, and $S/C$, we prove an inequality relating $c_1^2$ and $c_2$ for ordinary surfaces which admit  generically ordinary semistable fibrations. This inequality differs from the BMY inequality by a correcting term which vanishes if the fibration is ordinary.
\end{abstract}

\vskip 0.5cm
\section{Introduction}

Let $S$ be a smooth projective  surface of general type over an algebraically closed field $k$. If $k$ is of characteristic zero, the Bogomolov-Miyaoka-Yau (BMY) inequality states that
$c_1^2 \le 3c_2.$
Over fields of characteristic $p > 0 $, there exist surfaces of general type which violate this inequality. This phenomenon gives rise to two intriguing problems in positive characteristic.
\begin{itemize}
	\item[$\mathbf{1)}$] To determine the precise conditions under which the BMY inequality holds.
	\item[$\mathbf{2)}$] To find relations of BMY type between the invariants $c_1^2, c_2$  of surfaces of general type under varying hypotheses.
\end{itemize}

It was conjectured that BMY should hold if the Picard scheme of the given surface is smooth (\cite{P}). This conjecture was disproved by Jang  (\cite{J1}) by constructing counter examples with smooth Picard schemes. In \cite {U}, Urzua constructs surfaces of general type (which are étale simply connected) with smooth Picard schemes for which
$c_1^2/c_2$ is dense in $[2, \infty)$. 
Recently, K. Joshi obtained inequalities relating $c_1^2, c_2$ for surfaces of general type under additional hypotheses. A notable relation is the inequality $c_1^2 \le 5c_2$ for minimal surfaces of general type which are Hodge-Witt and satisfy certain extra  hypotheses (\cite{K}, Thm. 4.17). \\

Our main result (Theorem 1) concerns ordinary smooth projective surfaces of general type which admit generically ordinary semistable fibrations. For such a surface we prove an inequality relating $c_1^2 ~\mbox{and} ~c_2$, which
differs from the BMY inequality by an additive term that vanishes if the fibration is ordinary. \\

In the second section of the paper, we collect definitions and  some basic facts about: 

\begin{itemize} 
	\item semistable fibrations of surfaces and their invariants
	\item absolute, relative, and arithmetic Frobenius morphisms
	\item locally exact differential forms on varieties
	\item ordinarity of varieties, relative ordinarity for surfaces fibered over curves and the notion of being generically ordinary
\end{itemize}
In the third section, our main aim is to verify the exactness of the following sequence relating the sheaves of locally exact differentials for a given semistable fibration $\pi:S\longrightarrow C$ 
$$0\longrightarrow \pi^*B^1_C \longrightarrow B^1_S \longrightarrow W_*B^1_{S/C} \longrightarrow0,$$
where $W$ is the arithmetic Frobenius morphism (Definition \ref{af}). This exact sequence is the main tool in proving our main result in Section 4. \\
\begin{theorem} \label{t1}
	Let $S$ be an ordinary smooth projective surface which admits a generically ordinary semistable fibration $\pi: S\longrightarrow C$ of genus $g\geq 2$ over a smooth projective curve of genus $q\geq 1$. Then the invariants $c_1^2 ~\mbox{and} ~c_2$ satisfy the following equation
	$$c^2_1= 2c_2 + \frac{12}{p-1}.h^1(B^1_{S/C})-3\delta$$
	where $\delta$ is the total number of singular points in the fibers of $\pi$.
\end{theorem}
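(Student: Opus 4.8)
The plan is to reduce the asserted identity to the single cohomological statement $h^1(B^1_{S/C})=(p-1)\lambda$, where $\lambda:=\deg_C\pi_*\omega_{S/C}$ and $\omega_{S/C}=\omega_S\otimes\pi^*\omega_C^{-1}$ is the relative dualizing sheaf, and then to prove that statement. The reduction is a bookkeeping exercise with the standard invariants of a semistable fibration: one has $c_2=4(q-1)(g-1)+\delta$, Noether's formula reads $c_1^2+c_2=12\chi(\mathcal{O}_S)$, and Leray for $\pi$ together with relative duality $R^1\pi_*\mathcal{O}_S\cong(\pi_*\omega_{S/C})^{\vee}$ gives $\chi(\mathcal{O}_S)=\lambda+(q-1)(g-1)$; eliminating $\chi(\mathcal{O}_S)$ and $(q-1)(g-1)$ yields the purely numerical identity $c_1^2-2c_2+3\delta=12\lambda$, so the theorem follows once $h^1(B^1_{S/C})=(p-1)\lambda$ is established.

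To get $h^1(B^1_{S/C})=(p-1)\lambda$ I would first compute $\chi(\pi^*B^1_C)$. From the curve sequence $0\to\mathcal{O}_C\to F_{C*}\mathcal{O}_C\to B^1_C\to 0$ and additivity of the Euler characteristic one finds $\deg_C B^1_C=(p-1)(q-1)$, hence $c_1(\pi^*B^1_C)=(p-1)(q-1)F$ with $F$ the class of a fibre; since $F^2=0$, $F\cdot K_S=2g-2$ and $c_2(\pi^*B^1_C)=0$, Riemann--Roch on $S$ for the rank-$(p-1)$ bundle $\pi^*B^1_C$ gives $\chi(\pi^*B^1_C)=(p-1)\bigl(\chi(\mathcal{O}_S)-(q-1)(g-1)\bigr)=(p-1)\lambda$. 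Next I would feed the exact sequence $0\to\pi^*B^1_C\to B^1_S\to W_*B^1_{S/C}\to 0$ of Section~3 into cohomology. Ordinarity of $S$ means $H^i(S,B^1_S)=0$ for all $i$, so the long exact sequence collapses to $H^0(\pi^*B^1_C)=0$ and isomorphisms $H^i(W_*B^1_{S/C})\cong H^{i+1}(\pi^*B^1_C)$; since $W$ is finite, $H^i(B^1_{S/C})\cong H^{i+1}(\pi^*B^1_C)$. Hence $(p-1)\lambda=\chi(\pi^*B^1_C)=-h^1(\pi^*B^1_C)+h^2(\pi^*B^1_C)=h^1(B^1_{S/C})-h^0(B^1_{S/C})$, and everything is reduced to the vanishing $h^0(B^1_{S/C})=0$.

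This vanishing is the step I expect to be the main obstacle. Let $F_{S/C}\colon S\to S':=S^{(p/C)}$ be the relative Frobenius and $W\colon S'\to S$ the arithmetic Frobenius, so $F_S=W\circ F_{S/C}$, and recall $B^1_{S/C}$ fits in $0\to\mathcal{O}_{S'}\xrightarrow{F_{S/C}^{\#}}F_{S/C*}\mathcal{O}_S\to B^1_{S/C}\to 0$ on $S'$. Because $H^0(\mathcal{O}_{S'})\to H^0(F_{S/C*}\mathcal{O}_S)=H^0(\mathcal{O}_S)$ is the Frobenius of the algebraically closed field $k$, hence bijective, the long exact sequence identifies $H^0(B^1_{S/C})$ with $\ker\bigl(b\colon H^1(S',\mathcal{O}_{S'})\to H^1(S,\mathcal{O}_S)\bigr)$, where $b$ is induced by $F_{S/C}^{\#}$. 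Since $F_S=W\circ F_{S/C}$, the $p$-linear Frobenius endomorphism of $H^1(\mathcal{O}_S)$ factors as $b$ composed with the map induced by $W^{\#}$; ordinarity of $S$ makes this endomorphism bijective, so $b$ is surjective and $\dim\ker b=h^1(\mathcal{O}_{S'})-h^1(\mathcal{O}_S)$. By flat base change along $F_C$ in the cartesian square defining $S'$ one has $\operatorname{pr}_{2*}\mathcal{O}_{S'}=\mathcal{O}_C$ and $R^1\operatorname{pr}_{2*}\mathcal{O}_{S'}=F_C^*R^1\pi_*\mathcal{O}_S$, so Leray over the curve $C$ gives $h^1(\mathcal{O}_{S'})-h^1(\mathcal{O}_S)=h^0(F_C^*R^1\pi_*\mathcal{O}_S)-h^0(R^1\pi_*\mathcal{O}_S)$. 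Finally, generic ordinarity of $\pi$ says precisely that the relative Hasse--Witt map $V\colon F_C^*R^1\pi_*\mathcal{O}_S\to R^1\pi_*\mathcal{O}_S$ induced by the relative Frobenius (on the fibre over $c$ it is the Cartier--Manin operator of $S_c$) is an isomorphism over a dense open of $C$; being a morphism of vector bundles of equal rank $g$ that is generically an isomorphism, $V$ is injective, so $H^0(F_C^*R^1\pi_*\mathcal{O}_S)\hookrightarrow H^0(R^1\pi_*\mathcal{O}_S)$ and therefore $\dim\ker b\le 0$. As $\dim\ker b\ge 0$ trivially, $h^0(B^1_{S/C})=0$; combining this with the previous paragraph gives $h^1(B^1_{S/C})=(p-1)\lambda$, hence the theorem.
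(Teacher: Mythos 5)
Your proof is correct and follows the same skeleton as the paper's: both reduce the theorem to the single identity $(p-1)d=h^1(B^1_{S/C})$ via $c_1^2=2c_2+12d-3\delta$, and both extract that identity from the long exact cohomology sequence of $0\to\pi^*B^1_{C/k}\to B^1_{S/k}\to W_*B^1_{S/C}\to 0$ together with the vanishing $H^i(S,B^1_{S/k})=0$ coming from ordinarity of $S$. Two sub-steps are executed differently. For the numerical part you apply Riemann--Roch on $S$ directly to the rank-$(p-1)$ bundle $\pi^*B^1_{C/k}$ to get $\chi(\pi^*B^1_{C/k})=(p-1)d$, whereas the paper runs the Leray spectral sequence for $\pi^*B^1_{C/k}$ and applies Riemann--Roch on $C$ to $R^1\pi_*\mathcal{O}_S\otimes B^1_{C/k}$; your version is marginally cleaner because it does not need ordinarity of $C$, which the paper invokes (without separate proof) to identify $H^1(S,\pi^*B^1_{C/k})$ with $H^0(C,R^1\pi_*\mathcal{O}_S\otimes B^1_{C/k})$. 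For the vanishing $h^0(B^1_{S/C})=0$ --- the one place generic ordinarity enters --- the paper argues that $\pi^{(p)}_*B^1_{S/C}$ vanishes on the ordinary locus and is torsion-free (since $B^1_{S/C}$ is flat over $C$), hence vanishes; you instead identify $H^0(B^1_{S/C})$ with $\ker\bigl(H^1(\mathcal{O}_{S^{(p)}})\to H^1(\mathcal{O}_S)\bigr)$, use ordinarity of $S$ to make that map surjective, and use generic injectivity (hence injectivity, by torsion-freeness of the source) of the relative Hasse--Witt map $F_C^*R^1\pi_*\mathcal{O}_S\to R^1\pi_*\mathcal{O}_S$ to bound $h^1(\mathcal{O}_{S^{(p)}})\le h^1(\mathcal{O}_S)$. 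Both mechanisms are sound and rest on the same torsion-freeness principle; yours is longer but makes the role of the Hasse--Witt matrix explicit. Two cosmetic points: the map $H^0(\mathcal{O}_{S^{(p)}})\to H^0(\mathcal{O}_S)$ is a $k$-algebra map between copies of $k$, hence the identity rather than the Frobenius of $k$ (harmless, since all you use is bijectivity); and the fibrewise operator you call Cartier--Manin is really the Hasse--Witt/Frobenius action on $H^1(\mathcal{O})$, the dual of the Cartier operator --- again immaterial for ordinarity.
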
 	
\begin{corollary} \label{cor1} Under the hypotheses of Theorem 1, the following inequality holds 
	$$c^2_1 \leq  3c_2 + \frac{12}{p-1}.h^1(B^1_{S/C})-4\delta.$$
\end{corollary}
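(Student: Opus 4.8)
The plan is to derive Corollary~\ref{cor1} as a formal consequence of Theorem~\ref{t1} together with the standard Euler characteristic formula for semistable fibrations recorded in Section~2. Comparing the two right-hand sides, and noting that the term $\tfrac{12}{p-1}\,h^1(B^1_{S/C})$ occurs identically in both, the asserted inequality $c_1^2 \le 3c_2 + \tfrac{12}{p-1}\,h^1(B^1_{S/C}) - 4\delta$ becomes, after substituting the value $c_1^2 = 2c_2 + \tfrac{12}{p-1}\,h^1(B^1_{S/C}) - 3\delta$ from Theorem~\ref{t1}, equivalent to the single inequality $\delta \le c_2$. So the entire content of the corollary is that the total number of nodes appearing in the fibers of $\pi$ is bounded above by the second Chern number of $S$.

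To prove $\delta \le c_2$, I would use the formula $c_2(S) = e(S) = (2-2q)(2-2g) + \delta = 4(q-1)(g-1) + \delta$, valid for a semistable fibration $\pi:S\to C$ with general fiber of genus $g$ over a base of genus $q$. This follows from additivity of the ($\ell$-adic) Euler characteristic over the base $C$, together with the facts that the generic fiber contributes $e(C)\,e(F)=(2-2q)(2-2g)$ and that a nodal degeneration raises the Euler characteristic by exactly one per node and carries no wild ramification; this last point is precisely where semistability is used, and it is part of the background collected in Section~2. Since Theorem~\ref{t1} assumes $g\ge 2$ and $q\ge 1$, we get $4(q-1)(g-1)\ge 0$, hence $c_2 = 4(q-1)(g-1) + \delta \ge \delta$.

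Putting the pieces together, $c_1^2 = 2c_2 + \tfrac{12}{p-1}\,h^1(B^1_{S/C}) - 3\delta = \bigl(3c_2 + \tfrac{12}{p-1}\,h^1(B^1_{S/C}) - 4\delta\bigr) - (c_2 - \delta) \le 3c_2 + \tfrac{12}{p-1}\,h^1(B^1_{S/C}) - 4\delta$, which is the assertion. I do not expect a genuine obstacle: once Theorem~\ref{t1} is available the corollary is immediate, and the only point deserving care is to invoke the positive-characteristic (étale) version of the node-counting Euler characteristic formula rather than its complex-analytic analogue, so that the semistability hypothesis really does the work of ruling out wild vanishing cycles.
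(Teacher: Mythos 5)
Your proposal is correct and follows essentially the same route as the paper: the paper likewise reduces the corollary to the identity $c_2 - \delta = 4(g-1)(q-1) \ge 0$ (item \textbf{iii)} of Section 2, quoted from Szpiro) and adds this nonnegative quantity to the equality of Theorem \ref{t1}. Your extra discussion of why $c_2 = \delta + 4(g-1)(q-1)$ holds in positive characteristic is a justification the paper takes for granted by citation, but the argument is the same.
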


Our notation is as follows: \\

Our base field is $k = \overline{ F}_p$  for some prime $p > 0$. 

$S$ is a projective smooth surface of general type over $k$. 

$C$ is a projective smooth curve of genus $q\geq 1$. 

$\pi: S\longrightarrow C$ is a semistable fibration of genus $g \geq 2$. 

$\omega_{S/C}$ is the relative canonical bundle.

$\mathfrak{M}_g$ is the moduli space of smooth genus $g$ curves.

$\overline{\mathfrak{M}}_g$ is the moduli space of stable genus $g$ curves.

$\mathfrak{A}_{g,1,n}$ is the moduli space of principally polarized abelian varieties of dimension $g$ with a symplectic level-$n$-structure.

$\mathfrak{A}^*_{g,1,n}$ is the Satake compactification of 
$\mathfrak{A}_{g,1,n}$.

$F_X$ and $F$ are absolute and relative Frobenius morphisms, respectively, for a variety $X$.

$F$ also denotes Frobenius morphism on the cohomology groups of a variety $X$.

$B^1_X$ is the locally exact differential forms for a variety $X$.

$B^1_{X/Y}$ is the relative locally exact differential forms for a morphism $f:X \longrightarrow Y$.

\section{Basics}
In this section, we state basic facts about semistable fibrations and ordinarity of varieties (especially of surfaces). We also include basic 
cohomological tools that we shall use in the rest of the paper. \\

\vskip 0.5cm

1. {\bf Semistable fibrations} 

\begin{definition} \label{s1}
	Let $C$ be a projective curve over an algebraically closed field $k$. We say that $C$ is $\it{stable}$ (resp., $\it{semistable})$ if:
	\begin{itemize}
		\item[$\mathbf{a)}$] $C$ is connected and reduced,
		\item[$\mathbf{b)}$] all singular points are normal crossings, and
		\item[$\mathbf{c)}$] an irreducible component, isomorphic to $\mathbf{P}^1$, meets the other components in at least three (resp., two) points.
	\end{itemize}
\end{definition}
The relative version is given in the following definition.
\begin{definition}
	A proper flat morphism $f:X\longrightarrow Y$ of relative dimension one of schemes is said to be  a $\it{(semi)stable}$ curve if every geometric fiber of $f$ is a (semi)stable curve. 
\end{definition}

From now on, we are concerned with  semistable fibrations $\pi:S \longrightarrow C$ of genus $g\geq 2$ on smooth projective surfaces $S$ where the base curve $C$ is smooth of genus $q\geq 1$. Let $T \subset C$ be the set of points over which the fiber is not smooth and $t$ be the cardinality of $T$.

\begin{definition} 
	A semistable fibration $\pi:S \longrightarrow C$ is called $\it{isotrivial}$ if there exists a finite morphism $\phi:C' \longrightarrow C$ such that the fiber product $S \times_C C'$ is, birationally on $C'$, isomorphic to the trivial fibration. In this case, one can assume that $\phi$ is étale on $C-T$.
\end{definition}
\begin{remark}\label{r9} For a given semistable fibration $\pi:S \longrightarrow C$, the following hold:
	\begin{itemize}
		\item[$\mathbf{1)}$]  We have a morphism $\alpha_{\pi}:C \longrightarrow \overline{\mathfrak{M}}_g$ and $\alpha_{\pi}$ is constant iff $\pi$ is isotrivial.
		\item[$\mathbf{2)}$] The uniqueness of the semistable model implies that if the fibration $\pi$ is isotrivial, then it is smooth.
	\end{itemize}
	\end{remark}
	Next we discuss some basic invariants of semistable fibrations. For a semistable fibration 
$\pi : S \to C$, we define the following invariants:
\begin{itemize}
	\item[$\mathbf{1)}$] $d=\text{degree}(\pi_* \omega_{S/C})$,
	\item[$\mathbf{2)}$] $\delta = \displaystyle\sum_{P\in T}^{} \delta_P$ where $\delta_P$ is the number of singular points in a fiber,
	\item[$\mathbf{3)}$] $c_1$ and $c_2$ are the first and the second Chern classes of $S$,
	\item[$\mathbf{4)}$] $N = \displaystyle{\bigoplus_{P:\text{sing in a fiber}}^{}{i_P}_*(k(P))}.$ 
\end{itemize}    
These invariants satisfy the following relations
\cite[Sections 0,1, p.46-49]{Sz}.
\begin{itemize}
	\item[$\mathbf{i)}$] $\chi(\mathcal{O}_S)= \dfrac{c_1^2+c_2}{12}= \chi (\pi_*\mathcal{O}_S)-\chi (R^1\pi_* \mathcal{O}_S)$,
	\item[$\mathbf{ii)}$] $c^2_1= 12d- \delta + 8(g-1)(q-1)$,
	\item[$\mathbf{iii)}$] $c_2=\delta + 4(g-1)(q-1)$. 
\end{itemize}
From the equations in $\mathbf{ii)}$ and $\mathbf{iii)}$, we obtain the equality $c^2_1= 2c_2 + 12d- 3\delta $. \\
We also have the following exact sequences (loc. cit.).
\begin{itemize}
	\item[$\mathbf{5)}$] $0\longrightarrow \pi^*\Omega^1_C
	\longrightarrow \Omega^1_{S/k} \longrightarrow \Omega^1_{S/C}  \longrightarrow 0$
	\item[$\mathbf{6)}$] $0\longrightarrow \Omega^1_{S/C} \longrightarrow \omega^1_{S/C} \longrightarrow N  \longrightarrow 0$
\end{itemize}

\vskip 0.5cm

2. {\bf Ordinarity} \\

In this subsection, we first consider the absolute, the arithmetic and the relative Frobenius morphisms  for a morphism $f : X \longrightarrow Y$ of varieties in characteristic $p > 0$.
\begin{definition}
	The $\it{absolute ~Frobenius ~morphism}$ $F_Y : Y \longrightarrow Y$ is given by the identity map on the underlying topological space and the $p$-th power map on the structure sheaf.    
\end{definition}

\begin{remark}\label{cd}
	The following diagram is commutative:
	\begin{center}
		\begin{tikzcd}
		X \arrow[r, "F_X"] \arrow[d, "f"]
		& X \arrow[d, "f"] \\
		Y \arrow[r, "F_Y"]
		& Y
		\end{tikzcd}
	\end{center}
\end{remark}

\begin{definition} \label{af}
	The $\it{arithmetic ~Frobenius ~morphism}$ is the morphism 
	$$W : X^{(p)} = X \times_{(Y,F_Y)} Y \longrightarrow X$$ 
	obtained from $F_Y$  by the base extension.
\end{definition}

It follows from Remark \ref{cd} that there exists a unique morphism   $F : X \longrightarrow X^{(p)}$ over $Y$ fitting into the following commutative diagram.
\begin{center}
	\begin{tikzcd}
	X
	\arrow[drr, bend left, "F_X"]
	\arrow[ddr, bend right, "f"]
	\arrow[dr, dotted, "{F}" description] & & \\
	& X^{(p)} \arrow[r, "W"] \arrow[d, "f^{(p)}"]
	& X \arrow[d, "f"] \\
	& Y \arrow[r, "F_Y"]
	& Y
	\end{tikzcd}
\end{center}
$$\text{Diagram 1}$$

\begin{definition}
	$F:X \longrightarrow X^{(p)}$ is called the $\it{relative~ Frobenius ~morphism}$.   
\end{definition}

We next discuss the concepts of ordinarity and generic ordinarity. For a variety $X$ of dimension $n$, we have a complex
$$0\longrightarrow {F_X}_*\mathcal{O}_X \xrightarrow{d^1}{F_X}_* \Omega^1_{X/k} \xrightarrow{d^2}{F_X}_* \Omega^2_{X/k} \xrightarrow{d^3} \cdots \xrightarrow{d^n} {F_X}_* \Omega^n_{X/k} \longrightarrow 0.$$
The sheaf $\operatorname{Im}(d^i)$ is called the sheaf of locally exact $i$-th differential forms and is denoted by $B^i_{X/k}$.
Notice that $B^1_{X/k}= \operatorname{Coker}(F_X)$ and thus sits in the short exact sequence
\begin{equation}
  0\longrightarrow \mathcal{O}_X \xrightarrow{F_X} {F_X}_*\mathcal{O}_X \longrightarrow{}B^1_{X/k}\longrightarrow 0.\tag{1}
\end{equation}
\begin{definition}(\cite{I1}, Def. 1.1)
	We say that  $X$ is $\it{ordinary}$ if $H^i(X,B^j_{X/k})=0$ for all $i$ and $j$.
\end{definition}

We recall the characterization of ordinary curves and surfaces in terms of the action of Frobenius on the cohomology.

\begin{enumerate}
	\item[$\mathbf{1)}$]  Let $X$ be a smooth projective curve of genus $g$. Set $V=H^1(X,\mathcal{O}_X)$. \\ Let $V_n = \{ \xi \in V \mid F^m(\xi)=0 \text{ for some positive integer m}\}$ be the subspace on which $F$ is nilpotent and $V_s$ the complement of $V_n$ in $V$. In fact, $$V_s= \text{Span}(\{ \xi \in V \mid F(\xi) = \xi \} ).$$ 
	
	The natural number $\sigma_X = \operatorname{dim}_k(V_s)$ is called the $p$-rank of $X$. The following facts are well-known:
	\begin{itemize}
		\item[$\mathbf{a)}$] $X$ is an ordinary curve if and only if  $\sigma_X = g$.
		\item[$\mathbf{b)}$] The $p$-rank of a curve coincides with the $p$-rank
		$\sigma_J$ of its Jacobian. 
		\item[$\mathbf{c)}$] 
		For a semistable curve $X$, the Jacobian sits in  an extension of group schemes
		$$0 \longrightarrow G_m^s  \longrightarrow J_X \longrightarrow A \longrightarrow 0$$
		where $A$ is an abelian variety. We define the $p$-rank of $X$ by setting
		$$\sigma_X = s + \sigma_A.$$
	\end{itemize}
	
	\item[$\mathbf{2)}$] Let $S$ be a surface. By using the short exact sequence $(1)$ and Serre duality, we see that $S$ is ordinary if and only if $H^i(X,B^1_{X/k})=0$ for all $i$. This condition is equivalent to requiring
	$$F : H^i(S, \mathcal{O}_S) \to H^i(S, \mathcal{O}_S)$$
	be bijective for $i = 1, 2$.
	
\end{enumerate}

\begin{definition}(\cite{J1}, Def.2.8) 
 Let $\pi:S\longrightarrow C$ be a proper semistable fibration. We say that $\pi$ is
$\it{generically ~ordinary}$ if at least one closed fiber of $\pi$ is ordinary. (Hence almost all
closed fibers of $\pi$ are ordinary.)
\end{definition}

Next we shall prove Lemma \ref{l21} which relates the 
ordinarity of the fibers in a semistable fibration on a surface $S$ and the BMY inequality on $S$. Recall that
the BMY inequality is the relation $$c_1^2(X)\leq 3c_2(X)$$ between the Chern classes of the smooth projective surface $X$. \\

\begin{remark} \label{r13}
	We can verify the BMY inequality on certain minimal surfaces of general type by computing directly $c_1^2$ and $c_2$. For example:
	\begin{itemize}
		\item[$\mathbf{1)}$]Let $\pi:X\longrightarrow C$ be any smooth isotrivial fibration  with $g(C)\geq 2$ and $g(F)\geq 2$. Then we have 
		$$c_1^2(X)=8(g(C)-1)(g(F)-1)$$
		$$c_2(X)=4(g(C)-1)(g(F)-1)$$
		and so $c_1^2(X)=2c_2(X)$.
		\item[$\mathbf{2)}$] Let $X$ be any complete intersection smooth surface of general type of degree $d=(d_1,d_2,\cdots,d_{n-2})$ in $\mathbf{P}^n$. We assume that $\displaystyle\sum_i d_i > n+1$ and we set $\xi = \mathcal{O}_X(1).$ Then we have 
		$$c_1^2(X)= \left[ \displaystyle-n-1+ \sum_i d_i\right]^2\xi^2,$$
		$$c_2(X)= \left[ \displaystyle\frac{(n+1)n}{2}-(n+1)\sum_i d_i + \sum_{i\leq j} d_id_j  \right] \xi^2 $$
		and so $c_1^2(X)\leq c_2(X)$.\\
In particular, the BMY inequality holds on any smooth surface of degree $d\geq 5$ in $\mathbf{P}^3.$
		
	\end{itemize}
\end{remark}

The following lemma provides other examples for which the BMY inequality holds. In the proof of 
this lemma, we will use the basic properties of the Ekedahl-Oort strata in $\mathfrak{A}^*_{g,1,n}$ (\cite{O}, Sections 2-6). 
\begin{lemma}\label{l21}
	Let $\pi:S\longrightarrow C$ be a semistable fibration of genus $g$ such that the fibers are all
	(i) ordinary, or (ii) of $p$-rank $g-1$, or
	(iii) supersingular but not superspecial, or (iv) superspecial. Then $\pi$
	is isotrivial. Hence, the BMY inequality
	holds on the surface $S$.
\end{lemma}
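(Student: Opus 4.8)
The plan is to show that under any of the hypotheses (i)–(iv), the associated moduli map $\alpha_\pi : C \to \overline{\mathfrak M}_g$ is constant, and then invoke Remark \ref{r9} to conclude that $\pi$ is isotrivial, hence smooth; the BMY inequality then follows from part (1) of Remark \ref{r13}, since a smooth isotrivial fibration with $g(F)\geq 2$ satisfies $c_1^2 = 2c_2 \leq 3c_2$. (When $q=g(C)=1$ the base case is slightly different, but an isotrivial fibration over an elliptic curve is still smooth, and one checks $c_1^2 = c_2 = 0$ directly from relations ii), iii) with $\delta = 0$; I would remark on this separately.)

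The heart of the argument is the constancy of $\alpha_\pi$. First I would observe that in each of the four cases the fibers all have the same $p$-rank (respectively $g$, $g-1$, $0$, $0$) and in fact, via the Ekedahl–Oort stratification, lie in a single EO-stratum of $\mathfrak A_{g,1,n}^*$: cases (i), (ii), (iv) are the open stratum, a codimension-one stratum, and the unique zero-dimensional (superspecial) stratum respectively, while (iii) is the supersingular-but-not-superspecial locus, whose relevant components we treat via the Torelli map. The key geometric input is that the Torelli morphism $\mathfrak M_g \to \mathfrak A_{g,1,n}$ (after adding level structure to rigidify, which is harmless since it only changes $C$ by an étale cover and constancy is insensitive to that) composed with the moduli map $C \to \mathfrak M_g$ lands, by hypothesis, inside a prescribed EO-stratum. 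Using the basic properties of EO-strata from \cite{O}, Sections 2–6 — in particular that the closure of each stratum of dimension $\leq 1$ meets the Torelli image in a subvariety that is either zero-dimensional or whose only positive-dimensional parts correspond to products/degenerations not realized by a nonisotrivial semistable family of \emph{smooth} genus-$g$ curves over a complete base — I would argue that the image of $C$ must be a point.

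More concretely, for (iv) the superspecial locus is finite, so $\alpha_\pi(C)$ is a point immediately. For (i), if $\pi$ were nonisotrivial then $\alpha_\pi(C)$ would be a complete curve inside the ordinary (open) stratum; but a complete curve in $\overline{\mathfrak M}_g$ whose generic point is a smooth ordinary curve and which is disjoint from the non-ordinary locus contradicts the fact that the non-ordinary (i.e. complement of the $p$-rank $g$) locus is an ample divisor-type condition — any complete curve in $\overline{\mathfrak M}_g$ must meet it — so $\alpha_\pi$ is constant. For (ii) and (iii) one runs the analogous positivity/dimension count for the corresponding stratum closures, again using that a complete curve cannot avoid the deeper strata while staying generically in a fixed one unless it is a point. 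The main obstacle I anticipate is case (iii): the supersingular locus is not irreducible and its geometry inside $\mathfrak A_{g,1,n}$ (for $g\geq 3$) is subtle, so pinning down that the Torelli image of a complete curve forced into "supersingular but not superspecial" must be constant requires the most care — this is where I would lean hardest on the stratification results of \cite{O} and, if needed, restrict attention to the components actually meeting the Torelli locus. Cases (i), (ii), (iv) should be comparatively routine.
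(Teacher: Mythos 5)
Your high-level strategy is the same as the paper's: force the period/moduli map from the complete base curve $C$ to be constant using the Ekedahl--Oort stratification, deduce isotriviality via Torelli and Remark \ref{r9}, conclude that $\pi$ is smooth, and get $c_1^2=2c_2\le 3c_2$. (The paper works with the map $h:C\to\mathfrak{A}^*_{g,1,n}$ obtained by extending the principal polarization on the relative Jacobian over the singular fibers via Moret-Bailly, rather than with $\alpha_\pi:C\to\overline{\mathfrak{M}}_g$ composed with Torelli, but that is a cosmetic difference. Your extra care with the $q=1$ base case is a genuine improvement over the paper, which silently invokes Remark \ref{r13}(1).)

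However, there is a genuine gap in cases (ii) and (iii). The single fact that makes the paper's proof work uniformly in all four cases is Oort's theorem (\cite{O}, Thm.\ 6.4) that \emph{every Ekedahl--Oort stratum is quasi-affine}; a nonconstant map from a complete curve into a quasi-affine variety is impossible, so $h$ is constant the moment its image lies in a single stratum. You never identify this statement. Your argument for (i) (the non-ordinary locus is the zero divisor of the Hasse invariant, a section of an ample power of the Hodge bundle on $\mathfrak{A}^*_{g,1,n}$, so every complete curve meets it) is a correct substitute in that case, and finiteness of the superspecial locus handles (iv), but for (ii) and (iii) you offer only a ``positivity/dimension count'' that you yourself flag as uncertain. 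Without quasi-affineness (or an equivalent ampleness statement for the boundary of the relevant stratum closure), the claim that a complete curve cannot sit generically inside a fixed locally closed stratum is unproved. A second point you should make explicit, and which the paper does address: each of the four hypotheses must pin down a \emph{single} elementary sequence $\varphi$ (the paper checks $|\varphi|=g(g+1)/2$, $g(g+1)/2-1$, $1$, $0$ respectively, each realized by a unique $\varphi$). This is essential because the quasi-affineness argument applies stratum by stratum; in particular for (iii) the supersingular locus is not itself an EO stratum, so landing in the single stratum $\varphi=\{0,\dots,0,1\}$ is exactly the nontrivial verification, not a side remark.
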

\begin{proof} 
	Let $U \subseteq S$ be the union of the smooth fibers of $\pi$ and set $C' = \pi(U)$. The canonical principal polarization on the relative Jacobian $J_{U/C'}$ extends to a principal cubic structure on $J_{S/C}$ (\cite{LMB}, Chap. II, Thm. 3.5) which we denote by $\Theta$. Thus, we obtain a morphism
	$$h:C \longrightarrow \mathfrak{A}^*_{g,1,n}$$  given by $$p \mapsto (J_{S_p}, \Theta_p).$$ 
	In each of the cases $\mathbf{(i)}$, $\mathbf{(ii)}$, $\mathbf{(iii)}$  and $\mathbf{(iv)}$ we verify that the image $h(C)$ lies in an  Ekedahl-Oort stratum determined by a unique elementary sequence $\varphi$ (\cite{O}, Definition 2.1.).
	\begin{itemize}
		\item[$\mathbf{Case ~(i)}$] : $|\varphi|=g\cdot(g+1)/2$ correspondes only to the elementary sequence $$\varphi = \{1,2,\cdots,g\}.$$
		\item[$\mathbf{Case ~(ii)}$] : $|\varphi|=g\cdot(g+1)/2-1$ is obtained only from the elementary sequence 
		$$\varphi = \{1,2,\cdots,g-1,g-1\}.$$
		\item[$\mathbf{Case ~(iii)}$] : If the fibers are supersingular, but not superspecial, then we have $|\varphi|=1$ for which the elementary sequence is
		$$\varphi = \{0,0,\cdots,0,1\}.$$
		\item[$\mathbf{Case ~(iv)}$] : If the fibers are superspecial, then we have $|\varphi|=0$ which arises only from the elementary sequence $$\varphi = \{0,0,\cdots,0\}.$$
	
	\end{itemize}
It follows from (\cite{O}, Thm.6.4) that the image $h(C)$ is quasi-affine, and so the morphism $h$ is constant on $C$. Then we see that the smooth fibers of $\pi$ are all isomorphic by applying the Torelli theorem. That is, $\pi : S \longrightarrow C$ is an isotrivial semistable fibration. Therefore, $\pi$ is smooth and  
	$d = 0, ~\delta = 0$. It follows that $c_1^2=2c_2$ by Remark \ref{r13}.
	
\end{proof}

\section{Some Local Algebra}
Let $\pi: S\longrightarrow C$ be a fibration as in Section 1. Then one has the following short exact sequence of $\mathcal{O}_S$-modules
 \begin{equation}
  0\longrightarrow \pi^*\Omega^1_{C/k} \longrightarrow \Omega^1_{S/k} \longrightarrow \Omega^1_{S/C}  \longrightarrow 0 . \tag{2}
\end{equation}
Since $F_S$ is a finite morphism, we obtain the following exact sequence
\begin{equation}
0\longrightarrow {F_S}_*\pi^*\Omega^1_{C/k} \longrightarrow {F_S}_*\Omega^1_{S/k} \longrightarrow 
{F_S}_*\Omega^1_{S/C} \longrightarrow 0  \tag{3}
\end{equation}
applying ${F_S}_*$ in (2).\\\\
In Lemma \ref{l2}, we prove the existence of a short exact sequence of sheaves of locally exact differential forms (analogous to (3)).
To this end, we first recall the definition of the relative locally exact differential forms $B^1_{S/C}$. Using the relative deRham complex 
\begin{center}
	\begin{tikzcd}
	0 \arrow[r, ""] & \mathcal{O}_S \arrow[r, "d"] & \Omega^1_{S/C}
	\arrow[r," "] & 0
	\end{tikzcd}
\end{center}
and the relative Frobenius map  $F:S\longrightarrow S^{(p)}$, we obtain a complex
\begin{center}
	\begin{tikzcd}
	0 \arrow[r, ""] & F_*\mathcal{O}_S \arrow[r, "F_*d"] & F_*\Omega^1_{S/C}
	\arrow[r," "] & 0.
	\end{tikzcd}
\end{center}

\begin{definition}
The sheaf $B^1_{S/C}:= \operatorname{Im}(F_*d)$ is called the $\it{relative ~locally ~exact ~differential ~forms}$ for the fibered surface $\pi:S\longrightarrow C$.    
\end{definition}
\begin{remark}
	$B^1_{S/C}$ sits in the following short exact sequence 
	$$ 0 \longrightarrow \mathcal{O}_{S^{(p)}} \longrightarrow F_*\mathcal{O}_S \longrightarrow B^1_{S/C} \longrightarrow 0. $$
\end{remark}
\begin{remark}
	Let $X$ be a scheme over a perfect field of positive characteristic $p$ and $W$ be the arithmetic Frobenius morphism. Then $X^{(p)} \simeq X$ via the map $W$, because $F_k:\text{Spec}(k) \longrightarrow \text{Spec}(k)$ is an isomorphism. Therefore, $B^1_{X/k}$ can be viewed as a sheaf of $\mathcal{O}_X$-modules on $X$. 
\end{remark}

\begin{proposition}
	Let $\pi:S\longrightarrow C$ be a fibration. Then there exists a morphism of $\mathcal{O}_S$-modules $\phi:\pi^* {F_C}_*\Omega^1_{C/k}\longrightarrow {F_S}_*\pi^* \Omega^1_{C/k}$.
\end{proposition}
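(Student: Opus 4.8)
The plan is to exhibit $\phi$ as the base-change morphism attached to the commutative square of Remark \ref{cd} (equivalently, to the relation $\pi\circ F_S = F_C\circ\pi$), with $f=\pi$. For any such commutative square and any quasi-coherent sheaf on the relevant copy of $C$ there is a canonical natural transformation of the shape $\pi^*(F_C)_*(-)\longrightarrow (F_S)_*\pi^*(-)$; evaluating it on $\Omega^1_{C/k}$ produces the desired map $\phi$. No flatness or properness hypothesis is needed for this direction of base change, so the construction is purely formal, and its naturality is automatic.

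Concretely I would build $\phi$ through the adjunction $\pi^*\dashv\pi_*$. Giving a morphism $\pi^*(F_C)_*\Omega^1_{C/k}\longrightarrow (F_S)_*\pi^*\Omega^1_{C/k}$ of $\mathcal{O}_S$-modules is the same as giving a morphism $(F_C)_*\Omega^1_{C/k}\longrightarrow \pi_*(F_S)_*\pi^*\Omega^1_{C/k}$ of $\mathcal{O}_C$-modules. By Remark \ref{cd} one has $\pi\circ F_S=F_C\circ\pi$, hence $\pi_*(F_S)_*=(F_C)_*\pi_*$, so the target is $(F_C)_*\bigl(\pi_*\pi^*\Omega^1_{C/k}\bigr)$. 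One then applies the functor $(F_C)_*$ to the unit of adjunction $\Omega^1_{C/k}\longrightarrow \pi_*\pi^*\Omega^1_{C/k}$. This yields a morphism of $\mathcal{O}_S$-modules $\phi$, functorial in the fibration $\pi$, which is exactly the assertion of the proposition.

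Since the present section is devoted to local algebra, I would also record and check the local form of $\phi$. Over affine opens $\operatorname{Spec}(B)\subseteq C$ and $\operatorname{Spec}(A)\subseteq\pi^{-1}(\operatorname{Spec}(B))$ with structure map $\pi^{\sharp}\colon B\to A$, the module $\pi^*(F_C)_*\Omega^1_{C/k}$ is $A\otimes_{B}\Omega^1_{B/k}$ in which $B$ acts on the factor $\Omega^1_{B/k}$ through $b\mapsto b^{p}$, whereas $(F_S)_*\pi^*\Omega^1_{C/k}$ is $A\otimes_{B}\Omega^1_{B/k}$ with the usual $B$-action on $\Omega^1_{B/k}$ but with its $A$-module structure restricted along $a\mapsto a^{p}$. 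The morphism is then $a\otimes\omega\longmapsto a^{p}\otimes\omega$; I would verify that it is well defined (the internal $p$-power twist on the source is absorbed because $\pi^{\sharp}$ is a ring homomorphism), that it is linear for the stated Frobenius-twisted module structures, and that these local descriptions are compatible on overlaps and hence glue to the global $\phi$. That $\Omega^1_{C/k}$ is invertible streamlines this bookkeeping but is not essential.

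The one point requiring care — rather than genuine difficulty — is keeping track of where the Frobenius twist sits: in $\pi^*(F_C)_*\Omega^1_{C/k}$ the $p$-power appears ``internally'', coming from $(F_C)_*$ before one pulls back along $\pi$, while in $(F_S)_*\pi^*\Omega^1_{C/k}$ it appears ``externally'', as the $\mathcal{O}_S$-module structure restricted along $F_S$. Checking that $a\otimes\omega\mapsto a^{p}\otimes\omega$ intertwines these two structures — equivalently, that the formal base-change construction above really lands in $(F_S)_*\pi^*\Omega^1_{C/k}$ with its correct module structure — is where the identifications must be made precisely; once this is done the statement follows, and $\phi$ is the first ingredient in the construction of the exact sequence relating $B^1_S$, $\pi^*B^1_C$ and $B^1_{S/C}$ announced in the introduction.
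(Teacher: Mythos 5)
Your proposal is correct, and the map you construct is the same one the paper writes down, but your primary route is genuinely different. The paper works purely locally: over $S=\operatorname{Spec}(R)$, $C=\operatorname{Spec}(A)$ it defines $\varphi$ by the explicit formula $\omega\otimes r\mapsto \omega\otimes r^p$ on ${F_A}_*\Omega^1_{A/k}\otimes_A R$ and verifies by hand that this is a well-defined $R$-linear map into ${F_R}_*(\Omega^1_{A/k}\otimes_A R)$, then sheafifies. You instead identify $\phi$ as the canonical base-change transformation $\pi^*(F_C)_*\to (F_S)_*\pi^*$ attached to the square $\pi\circ F_S=F_C\circ\pi$ of Remark \ref{cd}, obtained by adjunction from $(F_C)_*$ applied to the unit $\Omega^1_{C/k}\to\pi_*\pi^*\Omega^1_{C/k}$; this makes existence and naturality formal, with no flatness or properness needed, and unwinding the adjunction does recover exactly the formula $a\otimes\omega\mapsto a^p\otimes\omega$ that you (and the paper) record. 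What the conceptual route buys is that the only thing left to check is bookkeeping of where the Frobenius twist sits, which you correctly isolate: well-definedness over the tensor product comes down to $\pi^\sharp(b)^p=\pi^\sharp(b^p)$, which is precisely the computation the paper performs. What the paper's hands-on version buys is that the explicit local formula is immediately available for the subsequent restriction to $B^1_{C/k}$ in Remark \ref{r1} and for the exactness argument in Lemma \ref{l2}; since you record the local form as well, your write-up would serve the same purpose. No gaps.
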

\begin{proof}
	Since the statement is local on the both source and target, we may assume that $S=\operatorname{Spec}(R)$ and $C=\operatorname{Spec}(A)$. Therefore, $\pi^* {F_C}_*\Omega^1_{C/k}= \widetilde{{F_A}_*\Omega_{A/k} \bigotimes_A R}$ and 
	${F_S}_*\pi^* \Omega^1_{C/k} = \widetilde{ {F_R}_* ( \Omega^1_{C/k} \bigotimes_A R)}$ where 
	$F_A$ and $F_R$ denote the relevant Frobenius morphisms.
	Define an $R$-module homomorphism $$\varphi:{F_A}_*\Omega_{A/k} \otimes_A R \longrightarrow {F_R}_* ( \Omega^1_{C/k} \otimes_A R) $$ $$\displaystyle\sum_{i=1}^{n} a^i_1.a^i_2da^i_3 \otimes r_i \longmapsto \displaystyle\sum_{i=1}^{n} (a^i_1)^pa^i_2da^i_3 \otimes r^p_i.$$
	$\varphi $ is a well-defined $R$-module homomorphism due to the equality $$\displaystyle\sum_{i=1}^{n} a^i_1.a^i_2da^i_3 \otimes r^p_i = \displaystyle\sum_{i=1}^{n} r_i.((a^i_1)^pa^i_2da^i_3 \otimes 1). $$ \\
	Hence, we have a morphism of $\mathcal{O}_S$-modules 
	$$\widetilde{\varphi}:\widetilde{{F_A}_*\Omega_{A/k} \otimes_A R} \longrightarrow \widetilde{ {F_R}_* ( \Omega^1_{C/k} \otimes_A R)}.$$
	Thus, we obtain the required morphism of $\mathcal{O}_S$-modules
	$$\phi:\pi^* {F_C}_*\Omega^1_{C/k}\longrightarrow {F_S}_*\pi^* \Omega^1_{C/k}.$$
\end{proof}
\begin{remark} \label{r1}
	Since $B^1_{A/k}$ is a submodule of $ {F_A}_*\Omega^1_{A/k}$, $B^1_{A/k}\otimes_A R$ is a submodule of \\
	$ {F_A}_*\Omega^1_{A/k}\otimes_A R$. Therefore, we get a morphism of $R$-modules 
	$$ \overline{\varphi}: B^1_{A/k}\otimes_A R \longrightarrow {F_R}_* ( \Omega^1_{C/k} \otimes_A R) $$ by restricting $\varphi$ to $B^1_{A/k}\otimes_A R$. For a given differential form  $a_1.a_2da_3 \otimes 1 \in B^1_{A/k}\otimes_A R $, there exists $a_0 \in A$ such that $a_2da_3 = da_0$ as $a_2da_3 \in B^1_{A/k} $. Hence, $a_1.a_2da_3=a_1.da_0=a^p_1da_0=da^p_1a_0$. As a result, any element $\omega \in B^1_{A/k}\otimes_A R $ can be written as $\omega = da\otimes r$ for some $a\in A$ and $r\in R$. Thus, we see that $\overline{\varphi}$ is an injective homomorphism of $R$-modules. Therefore, we get an injective morphism of $\mathcal{O}_S$-modules 
	$$\overline{\phi}: \pi^*B^1_{C/k} \hookrightarrow {F_S}_*\pi^* \Omega^1_{C/k}.$$
\end{remark}

By the exact sequence (3)  and Remark \ref{r1}, we obtain a complex of $\mathcal{O}_S$-modules  $$ 0 \longrightarrow \pi^*B^1_{C/k}  \longrightarrow {F_S}_*\Omega^1_{S/k} \longrightarrow {F_S}_*\Omega^1_{S/C}  \longrightarrow 0.$$ Restricting to the the open subscheme $\operatorname{Spec}(A) \subset C$, we obtain the following complex of $R$-modules 
\begin{center}
	\begin{tikzcd}
	0 \arrow[r, ""] & B^1_{A/k} \otimes_A R \arrow[r, "\overline{u}"] &
	{F_R}_*\Omega^1_{R/k} \arrow[r, "\overline{v}"] & {F_R}_*\Omega^1_{R/A} \arrow[r," "] & 0 
	\end{tikzcd}
\end{center}
where $\overline{u} = {F_R}_*u \circ \overline{\varphi}$ and
$\overline{v}={F_R}_*v$. \\

We will work out the details of the $R$-module structure on ${F_R}_*\Omega^1_{R/A}$.
\begin{remark}
	One has the following commutative diagram 
	(which corresponds to  Diagram 1 applied to the morphism $f : S \longrightarrow C$ over $\operatorname{Spec}(A) \subset C$).
	\begin{center}
		\begin{tikzcd}
		A  \arrow[r, "F_A"] \arrow[d, "\psi"] 
		& A \arrow[d, "\psi^{(p)}"] \arrow[ddr, bend left, "\psi "] &\\
		R \arrow[r, "W"] \arrow[drr, bend right, " F_R"]
		& R^{(p)} \arrow[dr, dotted, "{F}" description] &  \\
		& & R
		\end{tikzcd}
	\end{center}
	$$\text{Diagram 2}$$
	
	In Diagram 2, 
	\begin{center}
		\begin{itemize}
			\item[$\mathbf{a)}$] $R^{(p)}=R\otimes_{A,F_A}A$,
			\item[$\mathbf{b)}$] $\psi: A\longrightarrow R$ is a ring homomorphism which corresponds to $\pi:S\longrightarrow C$,
			\item[$\mathbf{c)}$] $F:R^{(p)} \longrightarrow R$ given by $r\otimes a \longmapsto ar^p$ corresponding to $F:S\longrightarrow S^{(p)}$,
			\item[$\mathbf{d)}$] $W:R\longrightarrow R^{(p)}$ given by $ar\longmapsto r\otimes a^p$ corresponding to $W:S^{(p)} \longrightarrow S$,
			\item[$\mathbf{e)}$] $W\circ F=F_R$.
		\end{itemize}
	\end{center}
	Therefore, ${F_R}_*\Omega^1_{R/A}= W_*(F_*\Omega^1_{R/A})$. Then for any $\omega = (a_1r_1).r_2dr_3 \in {F_R}_*\Omega^1_{R/A} $, we have 
	$$\omega = (a_1r_1).r_2dr_3 = (r_1\otimes a^p_1).r_2dr_3 \text{ via }W$$ and 
	$$(r_1\otimes a^p_1).r_2dr_3 = a^p_1r^p_1r_2dr_3 \text{ via }F$$ i.e., first we make $\Omega^1_{R/A}$ an $R^{(p)}$-module via $F$ and then via the map $W:R\longrightarrow R^{(p)}$, $\Omega^1_{R/A}$ becomes an $R$-module. Moreover, we may view $W_*B^1_{R/A}$ as a subsheaf of ${F_R}_*\Omega^1_{R/A}=W_*(F_*\Omega^1_{R/A})$ because $B^1_{R/A}$ is the subsheaf of $F_*\Omega^1_{R/A}$.\\
\end{remark}

\begin{lemma}\label{l2}
	Let $\pi:S \longrightarrow C$ be a fibration on a smooth projective surface $S$. Then one has a short exact sequence
	\begin{equation}
	    0\longrightarrow \pi ^*B^1_{C/k} \longrightarrow B^1_{S/k} \longrightarrow W_*B^1_{S/C} \longrightarrow 0. \tag{4}
	\end{equation}
\end{lemma}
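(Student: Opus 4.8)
The plan is to establish the exact sequence (4) locally, on the affine patches $\operatorname{Spec}(A) \subset C$ with $S|_{\operatorname{Spec}(A)} = \operatorname{Spec}(R)$, where it becomes a statement about the complex
\begin{center}
	\begin{tikzcd}
	0 \arrow[r] & B^1_{A/k} \otimes_A R \arrow[r, "\overline{u}"] & {F_R}_*\Omega^1_{R/k} \arrow[r, "\overline{v}"] & {F_R}_*\Omega^1_{R/A} \arrow[r] & 0
	\end{tikzcd}
\end{center}
constructed above, together with the identification ${F_R}_*\Omega^1_{R/A} = W_*(F_*\Omega^1_{R/A})$ and $W_*B^1_{R/A} \subset {F_R}_*\Omega^1_{R/A}$ from the preceding remark. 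I would first observe that by Remark \ref{r1} the map $\overline{u}$ is injective, so the issue is exactness in the middle and identifying the image on the right with $W_*B^1_{R/A}$. Since $B^1_{S/k}$ is by definition $\operatorname{Im}(d^1) \subset {F_S}_*\Omega^1_{S/k}$ and $\pi^*B^1_{C/k} \subset {F_S}_*\pi^*\Omega^1_{C/k}$, one should check first that the inclusion $\overline{\phi}$ of Remark \ref{r1} carries $\pi^*B^1_{C/k}$ into $B^1_{S/k}$: an exact form $da_0 \in B^1_{A/k}$, pulled back and rewritten via $\varphi$, becomes $d(a_0) \in \Omega^1_{R/k}$, which is exact on $S$, so this is immediate. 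Thus we get a genuine complex $0 \to \pi^*B^1_{C/k} \to B^1_{S/k} \to W_*B^1_{S/C}$, where the right map is the restriction of ${F_S}_*v$; one checks it lands in $W_*B^1_{S/C}$ because $v$ sends an exact form $df$ on $S$ to its image $df$ in $\Omega^1_{S/C}$, which is relatively exact.

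The core of the argument is then a local diagram chase. Write any element of $B^1_{R/k}$ as $d\xi$ for $\xi \in R$ (using the first-order expansion of $d\xi$ and the $p$-th power trick as in Remark \ref{r1}). If $\overline{v}(d\xi) = 0$ in $F_*\Omega^1_{R/A}$, then $d\xi$ maps to $0$ in $\Omega^1_{R/A}$, i.e. $d\xi \in \operatorname{Im}(\pi^*\Omega^1_{C/k} \to \Omega^1_{S/k})$, so $d\xi = \sum r_j\, d\psi(a_j)$ for $a_j \in A$, $r_j \in R$. The point is to upgrade this to: $d\xi$ lies in the image of $B^1_{A/k} \otimes_A R$, i.e. $d\xi = \sum r_j'\, d\psi(b_j)$ with $d b_j \in B^1_{A/k}$. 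This is where I expect the main obstacle: one must use that $d\xi$ is \emph{globally} exact on $S$ (not merely closed) together with the fact that the sequence $0 \to \pi^*\Omega^1_{C} \to \Omega^1_S \to \Omega^1_{S/C} \to 0$ is locally split since $S \to C$ is smooth where we are computing $B^1$ — more precisely, locally one can choose a coordinate $y$ on the fiber so that $\Omega^1_{R/k} = R\,d\psi(x) \oplus R\,dy$, expand $d\xi = \partial_x\xi\, d\psi(x) + \partial_y\xi\, dy$, and deduce from $\overline v(d\xi)=0$ that $\partial_y \xi = 0$; then $\xi$ depends only on $x$ "to first order", and after the Frobenius twist the class $d\xi$ is pulled back from $B^1_{A/k}$. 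I would carry this out by checking the kernel/image identifications on completed local rings, where the étale-local coordinates are available, and then noting that the maps in question are maps of coherent sheaves so it suffices to verify exactness after such a faithfully flat base change.

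For surjectivity of $B^1_{S/k} \to W_*B^1_{S/C}$, given a relatively exact form, represented locally by $dy$ with $y \in R$ a relative coordinate, I must produce a globally (absolutely) exact form on $S$ mapping to it; but $dy \in \Omega^1_{R/k}$ is already absolutely exact, and its class in $B^1_{R/k}$ maps to the class of $dy$ in $B^1_{R/A}$, so surjectivity is local and immediate from the definitions. Finally, having the exact sequence of coherent sheaves on each affine open with compatible gluing (the constructions of $\phi$, $\overline\varphi$, and the identification ${F_R}_*\Omega^1_{R/A} = W_*F_*\Omega^1_{R/A}$ are all canonical, hence glue), I conclude (4) globally. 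The one subtlety to flag is that $B^1_{S/C}$ is a priori a sheaf on $S^{(p)}$, so the statement genuinely requires pushing forward by $W$; the identification $S^{(p)} \simeq S$ over $k$ (perfect) is what makes $W_*B^1_{S/C}$ an $\mathcal{O}_S$-module of the expected rank $p-1$ on the smooth locus, and a rank/length count at the finitely many singular points of the fibers — where $\Omega^1_{S/C}$ has torsion — is worth including as a sanity check that the sequence is exact there as well, though exactness at those points follows formally from the affine-local argument since $S$ itself is smooth.
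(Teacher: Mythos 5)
Your proposal follows essentially the same route as the paper's proof: reduce to affine opens $\operatorname{Spec}(A)\subset C$ with $\operatorname{Spec}(R)=\pi^{-1}(\operatorname{Spec}(A))$, get injectivity from Remark \ref{r1}, identify $\operatorname{Ker}(\overline v)$ with $\operatorname{Im}(\overline u)$ by analyzing which absolutely exact forms $d\xi$ vanish in $\Omega^1_{R/A}$ (the $p$-th power trick), and obtain surjectivity by lifting a relative exact form $d_{R/A}r$ to the absolute exact form $d_{R/k}r$. The one substantive point of difference is that you run the middle-exactness step in local coordinates and explicitly flag the nodes, where the splitting $\Omega^1_{R/k}=R\,d\psi(x)\oplus R\,dy$ fails and a direct computation (showing $d\xi\in R\cdot(y\,dx+x\,dy)$ forces $\xi\in\sum_m x^{pm}A+\sum_m y^{pm}A$ with $t=xy$) is still needed; you defer this but it does go through, and the paper is no more careful there, since its assertion that $d_{R/A}r=0$ implies $r\in A$ should really read $r\in A[R^p]$, which still lands in $\operatorname{Im}(\overline u)$.
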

\begin{proof}
	Let $\psi:A\longrightarrow R$ be the ring homomorphism corrosponding to $\pi$. Let $da \otimes r$ be in $B^1_{A/k} \otimes_A R. $ We may restrict $\overline{v}$ to the subsheaf $B^1_{R/k}$ of ${F_R}_*\Omega^1_{R/k}$ as $\overline{u}(da\otimes r)=r^pd\psi (a) = dr^p\psi (a)  \in B^1_{R/k}$. Also for a given $dr\in B^1_{R/k}$ since $\overline{v}(dr)=dr\in W_*B^1_{R/A}$, we have the following sequence of $R$-modules:
	\begin{center}
		\begin{tikzcd}
		0 \arrow[r, ""] & B^1_{A/k} \otimes_A R \arrow[r, "\overline{u}"] &
		B^1_{R/k} \arrow[r, "\overline{v}"] & W_*B^1_{R/A} \arrow[r," "] & 0.
		\end{tikzcd}
	\end{center}
	To complete the proof, we need to prove the following claims: 
	\begin{itemize}
     \item[$\mathbf{Claim ~ (1)}$]: $\overline{u}$ is injective,
    \item[$\mathbf{Claim ~ (2)}$]: $\operatorname{Im}(\overline{u})=\operatorname{Ker}(\overline{v})$,
    \item[$\mathbf{Claim ~ (3)}$]: $\overline{v}$ is surjective.
	\end{itemize}
	The first claim follows from Remark \ref{r1}. \\
	
	Clearly, $\operatorname{Im}(\overline{u})\subseteq\operatorname{Ker}(\overline{v})$ by the short exact sequence (2). Let $dr$ be in $\operatorname{Ker}(\overline{v})$. We have $dr= 0 $ in $B^1_{R/A}$ which implies that $r\in A$ i.e., there exists $a\in A$ such that $r=\psi (a)$. Therefore, $dr= d\psi (a) = \overline{u}(da\otimes 1)$. As a result, we have  $$\operatorname{Ker}(\overline{v})=\operatorname{Im}(\overline{u}) $$
	which completes the proof of the second claim. \\
	
	Let $[r_1.(r_2\otimes a)].dr_3$ be in $W_*B^1_{R/A}$. Then the last claim follows by the equality
	$$[r_1.(r_2\otimes a)].dr_3 = [(r_1\otimes 1)(r_2 \otimes a)].dr_3= (r_1r_2 \otimes a).dr_3= ar_1^pr_2^pdr_3=d\psi (a)r_1^pr_2^pr_3 = \overline{v}(ar_1^pr_2^pr_3).$$ 
	
	Thus, the sequence
	\begin{center}
		\begin{tikzcd}
		0 \arrow[r, ""] & B^1_{A/k} \otimes_A R \arrow[r, "\overline{u}"] &
		B^1_{R/k} \arrow[r, "\overline{v}"] & W_*B^1_{R/A} \arrow[r," "] & 0
		\end{tikzcd}
	\end{center}
	is a short exact sequence of $R$-modules. This implies the following is a short exact sequence of $\mathcal{O}_S$-modules
	\begin{center}
		\begin{tikzcd}
		0 \arrow[r, ""] &  \widetilde{B^1_{A/k} \otimes_A R }\arrow[r, ""] & \widetilde{B^1_{R/k} }\arrow[r, ""] & \widetilde{ W_*B^1_{R/A} } \arrow[r," "] & 0.
		\end{tikzcd}
	\end{center}
	Therefore, we have the following short exact sequence of $\mathcal{O}_S$-modules 
	$$0\longrightarrow \pi ^*B^1_{C/k} \longrightarrow B^1_{S/k} \longrightarrow W_*B^1_{S/C} \longrightarrow 0 .$$
\end{proof}

\section{The Main Result}
In this section, we prove our main result, Theorem \ref{t1}. The main ingredient is the short exact sequence of locally exact differential forms constructed in the preceding section. We will use the following well-known result \cite[Chapter 3, Exercises 8.3]{H} in the proof of Theorem \ref{t1}. \\

\begin{proposition}\label{p2} 
	Let $f: X \longrightarrow Y$ be a morphism of ringed spaces, let $\mathcal{F}$ be an $\mathcal{O}_X$-module and let $\mathcal{E}$ be a locally free $\mathcal{O}_Y$-module of finite rank. Then
	$$ R^if_*(\mathcal{F}\otimes f^*\mathcal{E})=R^if_*(\mathcal{F})\otimes \mathcal{E} $$
	$ \text{for all} ~~~i \ge 0.$
\end{proposition}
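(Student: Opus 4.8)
The plan is to reduce the statement for all $i$ to the case $i=0$ by a universality argument for $\delta$-functors, the whole reduction coming down to the situation in which $\mathcal{E}$ is free of finite rank.

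I would first treat $i=0$. The unit of adjunction $\mathcal{E}\to f_*f^*\mathcal{E}$, together with the lax monoidal structure of $f_*$, yields a morphism of $\mathcal{O}_Y$-modules, natural in $\mathcal{F}$,
$$\theta_{\mathcal{F}}\colon f_*(\mathcal{F})\otimes\mathcal{E} \longrightarrow f_*(\mathcal{F})\otimes f_*f^*\mathcal{E} \longrightarrow f_*(\mathcal{F}\otimes f^*\mathcal{E}).$$
Whether $\theta_{\mathcal{F}}$ is an isomorphism may be checked on an open cover of $Y$, and over any open $V\subseteq Y$ with $\mathcal{E}|_V\cong\mathcal{O}_V^{\,n}$ one has $f^*\mathcal{E}|_{f^{-1}V}\cong\mathcal{O}_{f^{-1}V}^{\,n}$, so both sides restrict to $(f_*\mathcal{F})^{\,n}|_V$ and $\theta_{\mathcal{F}}$ becomes the identity. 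Hence $\theta_{\mathcal{F}}$ is an isomorphism, functorially in $\mathcal{F}$.

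The crux of the argument is an acyclicity lemma: \emph{if $\mathcal{G}$ is an $\mathcal{O}_X$-module whose restriction $\mathcal{G}|_{f^{-1}V}$ is acyclic for $(f|_{f^{-1}V})_*$ for every $V$ in some open cover of $Y$, then $\mathcal{G}$ is $f_*$-acyclic}. This follows from the compatibility of higher direct images with restriction to opens of the base, $(R^jf_*\mathcal{G})|_V\cong R^j(f|_{f^{-1}V})_*(\mathcal{G}|_{f^{-1}V})$. I would apply it to $\mathcal{G}=\mathcal{I}\otimes f^*\mathcal{E}$ for an injective $\mathcal{O}_X$-module $\mathcal{I}$: over a trivializing open $V$ for $\mathcal{E}$ one has $(\mathcal{I}\otimes f^*\mathcal{E})|_{f^{-1}V}\cong(\mathcal{I}|_{f^{-1}V})^{\,n}$, and since the restriction of an injective module to an open subscheme is again injective, hence $f_*$-acyclic, the lemma shows that $\mathcal{I}\otimes f^*\mathcal{E}$ is $f_*$-acyclic. (Here, and below, one uses that $f^*\mathcal{E}$ is locally free of finite rank, hence flat, so that $-\otimes f^*\mathcal{E}$ is exact.) Getting this lemma cleanly stated and applied — in particular keeping straight what "acyclic" means after restricting the morphism to $f^{-1}V$ — is the step I expect to require the most care.

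To conclude, I would consider the two cohomological $\delta$-functors on the category of $\mathcal{O}_X$-modules given by $T^\bullet(\mathcal{F})=R^\bullet f_*(\mathcal{F}\otimes f^*\mathcal{E})$ and $U^\bullet(\mathcal{F})=R^\bullet f_*(\mathcal{F})\otimes\mathcal{E}$; both are genuine $\delta$-functors, because $-\otimes f^*\mathcal{E}$ and $-\otimes\mathcal{E}$ are exact and so carry short exact sequences to short exact sequences, so that the connecting homomorphisms of $R^\bullet f_*$ furnish the required $\delta$-structure. Every $\mathcal{O}_X$-module $\mathcal{F}$ embeds into an injective $\mathcal{I}$, and then $T^j(\mathcal{I})=0$ for $j>0$ by the acyclicity lemma, while $U^j(\mathcal{I})=R^jf_*(\mathcal{I})\otimes\mathcal{E}=0$; thus both $\delta$-functors are effaceable in positive degrees, hence universal. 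The degree-zero isomorphism $\theta$ therefore extends uniquely to an isomorphism $T^\bullet\cong U^\bullet$, which is precisely the asserted identity $R^if_*(\mathcal{F}\otimes f^*\mathcal{E})\cong R^if_*(\mathcal{F})\otimes\mathcal{E}$ for all $i\ge0$.
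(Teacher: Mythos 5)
Your proof is correct and complete. Note, however, that the paper does not prove this proposition at all: it is the projection formula, quoted verbatim from Hartshorne (Exercise III.8.3) as a known result, so there is no "paper proof" to compare against. Your argument is a clean solution to that exercise. All three ingredients check out: the degree-zero map $\theta_{\mathcal F}$ built from the unit $\mathcal E\to f_*f^*\mathcal E$ and the lax monoidal structure is indeed an isomorphism because the question is local on $Y$ and $\mathcal E$ is locally free of finite rank; the acyclicity of $\mathcal I\otimes f^*\mathcal E$ for injective $\mathcal I$ follows as you say from the localization $(R^jf_*\mathcal G)|_V\cong R^j(f|_{f^{-1}V})_*(\mathcal G|_{f^{-1}V})$ together with the fact that restriction to an open preserves injectives (valid for ringed spaces, since $j_!$ is an exact left adjoint to restriction — your phrase "open subscheme" should read "open subset"); and exactness of $-\otimes\mathcal E$ and $-\otimes f^*\mathcal E$ makes both $T^\bullet$ and $U^\bullet$ genuine $\delta$-functors, effaceable in positive degrees, hence universal, so $\theta$ extends uniquely to an isomorphism of $\delta$-functors. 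The only alternative worth mentioning is the slightly more pedestrian route of defining the natural comparison map in all degrees directly (via $\delta$-functoriality or a Čech/injective-resolution computation) and checking it is an isomorphism over a trivializing cover using that $R^if_*$ commutes with finite direct sums and with restriction to opens of $Y$; your universality argument buys a cleaner justification of where the higher-degree maps come from, at the cost of invoking Grothendieck's effaceability criterion.
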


\begin{remark}
	Let $\pi:S\longrightarrow C$ be a semistable fibration as in the statement of \ref{t1}. Then $ B^1_{C/k}$ is a locally free $\mathcal{O}_C$-module of rank $p-1$ and of degree $(p-1)(q-1)$. By Proposition \ref{p2}, we have   $$R^i\pi_* \pi^*B^1_{C/k}= R^i\pi_*(\mathcal{O}_S \otimes \pi^*B^1_{C/k})= R^i\pi_* \mathcal{O}_S \otimes B^1_{C/k}.$$
	
	We calculate the rank and the degree of the sheaf  $\mathcal{M}
	= R^1\pi_* \mathcal{O}_S \otimes B^1_{C/k}$
	and we get 
	$$\text{rank}(\mathcal{M})=r(p-1) \text{ and } \text{deg}(\mathcal{M})=(p-1)e+r(p-1)(q-1)$$
	where $r=\text{rank}(R^1\pi_*\mathcal{O}_S) \text{ and }e=\text{deg}(R^1\pi_*\mathcal{O}_S)$.
\end{remark}

Consider the Leray spectral sequence attached to the sheaf $\mathcal{F}= \pi^*(B^1_{C/k})$ on the fibration $\pi:S\longrightarrow C$, namely
\begin{equation}
	 E_2^{pq}=H^p(C,R^q\pi_* \pi^*B^1_{C/k})=H^p(C,R^q\pi_* \mathcal{O}_S \otimes B^1_{C/k}) \implies H^{p+q}(S,\pi^*B^1_{C/k}).  \tag{$\ast$}
	\end{equation}
We have the following properties:
\begin{itemize}
	\item[$\mathbf{a)}$] Since $C$ is a curve, $H^p(C,-)=0$ for $p>1$.
	\item[$\mathbf{b)}$] By Corollary 11.2 in \cite[Chapter 3]{H}, $R^q\pi_* \mathcal{O}_S = 0$ for $q>1.$
	\item[$\mathbf{c)}$] By proposition \ref{p2} and since $\pi_*\mathcal{O}_S = \mathcal{O}_C$, we have $\pi_* \pi^*B^1_{C/k}= B^1_{C/k}$.
\end{itemize}
Therefore, we get
$$H^{0}(S,\pi^*B^1_{C/k}) = H^0(C,B^1_{C/k}) \text{ and }H^{2}(S,\pi^*B^1_{C/k}) = H^1(C,R^1\pi_* \mathcal{O}_S \otimes B^1_{C/k}).$$
If we assume that $C$ is an ordinary curve, then we also have $$H^{1}(S,\pi^*B^1_{C/k}) = H^0(C,R^1\pi_* \mathcal{O}_S \otimes B^1_{C/k}). $$

\vskip 0.2cm

Now we prove Theorem \ref{t1}. We remark that the hypothesis in Theorem 1 differs from the hypothesis in Lemma \ref{l21}; we remove the $p$-rank condition on the non-smooth fibers, but now we assume that $S$ is ordinary. 
\vskip 0.5cm
\begin{theorem1}
	Let $S$ be an ordinary smooth projective surface which admits a generically ordinary semistable fibration $\pi: S\longrightarrow C$ of genus $g\geq 2$ over a smooth projective curve of genus $q\geq 1$. Then the invariants $c_1^2 ~\mbox{and} ~c_2$ satisfy the following equation
	$$c^2_1= 2c_2 + \frac{12}{p-1}.h^1(B^1_{S/C})-3\delta$$
	where $\delta$ is the total number of singular points in the fibers of $\pi$.
\end{theorem1}
\begin{proof}
Recall that for semistable fibrations, we have the equality:
$$c^2_1=2c_2+ 12d -3\delta.$$
We will prove that $(p-1)d = h^1(B^1_{S/C})$. For this purpose, we will use the short exact sequence (4) proved in Lemma \ref{l2} :
	$$0\longrightarrow \pi ^*B^1_{C/k} \longrightarrow B^1_{S/k} \longrightarrow W_*B^1_{S/C} \longrightarrow 0.$$ 
	We have the long exact sequence 
	\begin{center}
		$$0\longrightarrow H^0(S,\pi^*B^1_{C/k}) \longrightarrow H^0(S,B^1_{S/k}) \longrightarrow H^0(S,W_*B^1_{S/C}) \longrightarrow \cdots$$
		$$\cdots \longrightarrow H^1(S,\pi^*B^1_{C/k}) \longrightarrow H^1(S,B^1_{S/k}) \longrightarrow H^1(S,W_*B^1_{S/C}) \longrightarrow \cdots$$ 
		$$\cdots \longrightarrow H^2(S,\pi^*B^1_{C/k}) \longrightarrow H^2(S,B^1_{S/k}) \longrightarrow H^2(S,W_*B^1_{S/C}) \longrightarrow 0.$$
	\end{center}
	We note the following:
	\begin{itemize}
		\item[$\mathbf{1)}$] Since $S$ is an ordinary surface, $$H^i(S,B^1_{S/k})=0$$ 
		$ \text{for all} ~~~i \ge 0.$ It is easily concluded that $H^2(S,W_*B^1_{S/C})=0.$ 
		\item[$\mathbf{2)}$] Since $\pi $ is a generically ordinary semistable fibration,
		$\pi^{(p)}_*B^1_{S/C}\mid_U=0$ where $U$ is the ordinary locus of $\pi$.
		However, $B^1_{S/C} $ is flat over $\mathcal{O}_C$ so $\pi^{(p)}_*B^1_{S/C}=0$. It follows that $H^0(S,W_*B^1_{S/C})=H^0(S^{(p)},B^1_{S/C})=H^0(C,\pi^{(p)}_*B^1_{S/C})=0$ as $W$ is a finite morphism.
	\end{itemize}
	Therefore, by the long exact sequence 
	$$H^0(S,\pi^*B^1_{C/k})=H^1(S,\pi^*B^1_{C/k})=0$$
	and
	$$H^1(S,W_*B^1_{S/C})= H^2(S,\pi^*B^1_{C/k}).$$
	Now, recall that $S$ is assumed to be an ordinary surface. Then $C$ is an ordinary curve and hence by using the Leray spectral sequence $(\ast)$ we have an equality:
	 $$\chi(\mathcal{M})= h^0(C,\mathcal{M}) - h^1(C,\mathcal{M})=h^1(S,\pi^*B^1_{C/k})-h^2(S,\pi^*B^1_{C/k}).$$
	 On the other hand, $$\chi(\mathcal{M})=\operatorname{deg}(\mathcal{M})-(q-1)\operatorname{rank}(\mathcal{M}).$$
	Therefore, we have $$-h^2(S,\pi^*B^1_{C/k})=\chi(\mathcal{M})$$ 
	and so
	$$-h^2(S,\pi^*B^1_{C/k})=(p-1)e + r(p-1)(q-1)-(q-1)r(p-1).$$ 
	This implies that $(p-1)d = h^2(S,\pi^*B^1_{C/k})=h^1(S,W_*B^1_{S/C})=h^1(B^1_{S/C})$ where $d=-e$. Recall that since $\pi: S\longrightarrow C$ is a semistable fibration, we have $c^2_1=2c_2 + 12d -3\delta.$ Substituting $\displaystyle d = \frac{1}{p-1} \cdot h^1(B^1_{S/C}) $, we obtain the equality:
	$$c^2_1=2c_2 + \frac{12}{p-1}.h^1(B^1_{S/C})-3\delta.$$
\end{proof}

Since for the fibration in Theorem \ref{t1} 
$$c_2 - \delta = 4(g-1)(q - 1) \ge 0$$
we obtain the inequality given in Corollary \ref{cor1}, namely 
$$c^2_1 \leq  3c_2 + \frac{12}{p-1}.h^1(B^1_{S/C})-4\delta.$$

\end{document}